\newtheorem{theorem}{Theorem}[section]
\newtheorem{lemma}[theorem]{Lemma}
\newtheorem{prop-def}{Proposition-Definition}[section]
\newtheorem{remark}[theorem]{Remark}
\newcommand{\nc}{\newcommand}
\newcommand{\delete}[1]{}
\nc{\mlabel}[1]{\label{#1}}  % Use this to suppress names
\nc{\mcite}[1]{\cite{#1}}  % Use this to suppress names
\nc{\mref}[1]{\ref{#1}}  % Use this to suppress names
\nc{\mbibitem}[1]{\bibitem{#1}} % Use this to show number
\nc{\mlabel}[1]{\label{#1}  % Use the next two lines to show names
{\hfill \hspace{1cm}{\bf{{\ }\hfill(#1)}}}}
\nc{\mcite}[1]{\cite{#1}{{\bf{{\ }(#1)}}}}  % Use this lines to show names
\nc{\mref}[1]{\ref{#1}{{\bf{{\ }(#1)}}}}  % Use this lines to show names
\nc{\mbibitem}[1]{\bibitem[\bf #1]{#1}} % Use this to show name
\nc{\bfk}{\mathbf{k}}
\nc{\Der}{\mathrm{Der}}
\nc{\Ker}{\mathrm{Ker}}
\begin{document}

\title{ 3-Lie algebra $A_{\omega}^{\delta}$-modules and induced modules }\footnotetext{ Corresponding author: Ruipu Bai, E-mail: bairuipu@hbu.edu.cn.}

\author{RuiPu  Bai}
\address{College of Mathematics and Information  Science,
Hebei University \\
Key Laboratory of Machine Learning and Computational\\ Intelligence of Hebei Province, Baoding 071002, P.R. China} \email{bairuipu@hbu.edu.cn}

\author{Yue Ma}
\address{College of Mathematics and Information  Science,
Hebei University, Baoding 071002, China} \email{mayue3670@163.com}

\author{Pei Liu}
\address{College of Mathematics and Information  Science,
Hebei University, Baoding 071002, China} \email{liupeimath@163.com}

\date{}

\begin{abstract}
In this paper, we define the induced modules of  Lie algebra ad$(B)$ associated with a 3-Lie algebra $B$-module,  and study   the relation between 3-Lie algebra $A_{\omega}^{\delta}$-modules and induced modules of inner derivation algebra ad$(A_{\omega}^{\delta})$.
We construct two infinite dimensional  intermediate series modules   of 3-Lie algebra $A_{\omega}^{\delta}$, and two  infinite dimensional   modules $(V, \psi_{\lambda\mu})$  and $(V, \phi_{\mu})$ of the Lie algebra
  ad$(A_{\omega}^{\delta})$, and prove that only  $(V, \psi_{\lambda0})$ and  $(V, \psi_{\lambda1})$ are induced modules.
\end{abstract}

\subjclass[2010]{17B05, 17B30}
\keywords{ 3-Lie algebra,  3-Lie algebra-module, induced module, intermediate series module}
\maketitle
\vspace{-.5cm}

%\tableofcontents

\numberwithin{equation}{section}

\allowdisplaybreaks

\section{Introduction}
n-Lie algebras were introduced in 1985 (\cite{Filippov}), and their structures were studied extensively \cite{Bai1,Bai2,Bai3,Farrill,Izquier1,Izquier2,Kasymov,Ling,Pozhidaev1,Pozhidaev2,Shengyunhe1}.  Later, 3-Lie algebras were applied in  noncommutative geometry and the quantum geometry of branes in M-theory
 (\cite{Bagger1,Bagger2,DebeSR,Gustavsson}).

 A canonical Nambu 3-Lie algebra (\cite{Nambu}) is a triple of classical observables on a three-dimensional phase space with coordinates $x, y, z,$
\begin{equation}\label{eq:nambu}
    [f_1, f_2, f_3]_\partial = \frac {\partial(f_1, f_2, f_3)}{\partial(x, y, z)},
\end{equation}
 which
is related to Nambu mechanics (generalizing Hamiltonian mechanics by using more than
one hamiltonian). The algebraic formulation of this theory is due to Takhtajan (\cite{T}), see also \cite{DebeSR,Gautheron}. Through years of work,   3-Lie algebras can be easily obtained from Lie algebras, commutative algebras, and Pre-Lie algebras (\cite{Aw,Bai4,Bai5,Bai6}).

The representation theory of
n-Lie algebras was first introduced in \cite{Kasymov}. The adjoint representation $(A,$ ad) of any n-Lie algebra $A$ is defined by
ad$: A\wedge A\rightarrow gl(A)$, for $\forall x, y\in A$, ad$(x, y): A\rightarrow A$,  $\forall z\in A$, $ad(x, y)z$ is the ternary bracket. In \cite{Bai6}, the infinite-dimensional simple 3-Lie algebra  $A_{\omega}^{\delta}$ was introduced; its adjoint representation was investigated there as well. It is proved that the regular representation of $A_{\omega}^{\delta}$ is a Harish-Chandra module, and the natural module of the inner derivation algebra is an intermediate series module.
In \cite{Shengyunhe2}, authors provided a new approach to representation theory of 3-Lie algebras,
which is a generalized representation of a 3-Lie algebra. They also gave examples of generalized representations of 3-Lie algebras, and computed 2-cocycles of the new cohomology.

In this paper, we continue to study the representations of 3-Lie algebras. First we construct two  infinite dimensional 3-Lie algebra  $A_{\omega}^{\delta}$-modules $T_{\lambda0}$ and $T_{\lambda1}$. Then we study the induced module of the Lie algebra ad$(B)$  associated with a 3-Lie algebra $B$ module.  We also construct two  modules $(V, \psi_{\lambda\mu}), (V, \phi_{\mu})$  of inner derivation algebra ad$(A_{\omega}^{\delta})$, and we prove  that $(V, \psi_{\lambda0})$ and $(V, \psi_{\lambda1})$ are induced modules, others are not.

Unless otherwise stated,  algebras and vector spaces are over a field $\mathbb F$ of characteristic zero,  and  $ Z$ is the set of integers.

\section{Preliminaries}

In this section we collect some basic  notions of $3$-Lie algebras.

 {\it A 3-Lie algebra} \cite{Filippov} is a vector space $B$  with a linear multiplication (or a $3$-Lie bracket) $[~,~ ,~ ]: B \wedge B\wedge B\rightarrow B$ satisfying, $\forall x_1, x_2, x_3, x_4, x_5\in B,$

\begin{equation}\label{eq:jacobi}
[x_1, x_2, [x_3, x_4, x_5]]=[[x_1, x_2, x_3], x_4, x_5] +[x_3, [x_1, x_2, x_4], x_5] +[x_3, x_4, [x_1, x_2, x_5]].
 \end{equation}

For $x_1, x_2\in B,$ the linear mapping $\mbox{ad}(x_1, x_{2}):
B\rightarrow B$,
\begin{equation}\label{eq:left}
\mbox{ad}(x_1, x_{2})(x)=[x_1, x_2, x], ~ \mbox{ for all} ~  x\in B,
\end{equation}
is called {\it a left multiplication} defined by
$x_1$, $x_{2} \in B$.

Thanks to \eqref{eq:jacobi},  left multiplications and their linear  combinations
are derivations, which are called { \it inner derivations}. The set ad$(B)$ of all the inner derivations of $B$ is an ideal of the derivation algebra Der$(B)$, and
ad$(B)$ is called {\it the inner derivation algebra of $B$}.

 Let  $B$ be a 3-Lie algebra,  $V$ be a vector space and  $\rho: B \wedge B\rightarrow gl(V)$ be a linear mapping. If $\rho$ satisfies for all $x_1, x_2, x_3, x_4\in B,$
\begin{equation}\label{eq:mod1}[\rho(x_1, x_2), \rho(x_3, x_4)]=\rho([x_1, x_2, x_3]_B, x_4)+ \rho(x_3,[x_1, x_2, x_4]_B),
\end{equation}
\begin{equation}\label{eq:mod2}\rho([x_1, x_2, x_3]_B, x_4)= \rho(x_1, x_2)\rho(x_3, x_4)+ \rho(x_2, x_3)\rho(x_1, x_4)+ \rho(x_3, x_1) \rho(x_2, x_4),
\end{equation}
\\then $(V, \rho)$ is called  {\it a representation}\cite{Kasymov} of the 3-Lie algebra $B$ (or is simply called {\it a 3-Lie algebra $B$-module}).
If there does not exist non-trivial modules of $V$, then $V$ is called {\it an irreducible module} of $B$.

For example,  for any 3-Lie algebra $B$, $(B, \mbox{ad})$ is a 3-Lie algebra  $B$-module, which is called the adjoint module of $B$,  where  ad$: B\wedge B\rightarrow gl(B)$,
for all $x_1, x_2\in B$, ad$(x_1, x_2)$ is the left multiplication.

 {\it A Cartan subalgebra} of a 3-Lie algebra $B$ is a nilpotent subalgebra of $B$ such that  $\forall y\in B$, if $y$ satisfies $[y, H, B]\subseteq H$, then $y\in H$.

 Let   $(V, \rho)$ be a 3-Lie algebra $B$-module, $\lambda\in  (H\wedge H)^*$. If
 \[
    V_{\lambda} = \{v\in V\mid \rho(h_1, h_2)v = \lambda(h_1, h_2) v, \quad \forall h_1, h_2\in H\}\neq 0,
\]
 then $\lambda$ is called a weight associated with the Cartan subalgebra $H$, and  $V_{\lambda}$ is called a weight space of $\lambda$.

 A module $(V, \rho)$ of a $3$-Lie algebra $B$ is referred to as a {\it weight module} if it admits a weight space decomposition
 $$V=\sum\limits_{\lambda\in\mathbb (H\wedge H)^*} V_\lambda,$$ where $H$ is a Cartan subalgebra of $B$.

A module of $B$ is  called  {\it a Harish-Chandra module} if it is a weight module and every weight space is finite-dimensional. A Harish-Chandra module of $B$ is an {\it intermediate series module} if the dimension of every weight space is  equal to one.

  Let A be a commutative associative algebra over $\mathbb F$ with a basis $\{L_r, M_r| r\in \mathbb Z\}$, where
  \begin{equation}
L_rL_s= L_{r+s}, ~~ M_rM_s= M_{r+s},~~  L_rM_s= 0, ~~\forall r, s\in \mathbb Z.
\label{eq:basisoperator}
\end{equation}

  Define linear mappings $\omega, \delta: A\rightarrow A$ by
\begin{equation}\label{eq:delta}
\delta(L_r)= rL_r, ~~~ \delta(M_r)= rM_r, \forall r\in \mathbb Z,
\end{equation}
\begin{equation}\label{eq:delta}
\omega(L_r)=M_{-r}, ~~~\omega(M_r)=L_{-r}, \forall r\in \mathbb Z.
\end{equation}
Then $\delta$ is a derivation of $A$, $\omega$ is an involution and satisfy $$\delta\omega+\omega\delta=0.$$ Thanks to Theorem 3.3 in \cite{Bai6}, $A$ is a simple canonical Nambu 3-Lie algebra with the multiplication
$$[x, y, z]_{\omega}=\begin{vmatrix}
\omega(x) & \omega(y) &\omega(z) \\
x & y & z  \\
\delta(x) & \delta(y) & \delta(z) \\
\end{vmatrix}, ~~~ \forall x, y, z\in A,$$ that is,
\begin{equation}\label{eq:table}
\begin{cases}
[L_r, L_s, M_t]_{\omega}= (s-r)L_{r+s-t}, ~~~ r, s, t\in Z,\\
[L_r, M_s, M_t]_{\omega}= (t-s)M_{s+t-r}, ~~~ r, s, t\in Z,\\
\mbox{and others are zero},
\end{cases}
\end{equation}
\\
 which is denoted by $A_{\omega}^{\delta}$.

\section{Modules and induced modules of $ A_{\omega}^{\delta}$}

In \cite{Bai6}, authors discussed the structure of the infinite dimensional simple 3-Lie algebra $A_{\omega}^{\delta}$. It is proved that
 the
adjoint representation $(A_{\omega}^{\delta}, \mbox{ad})$ is a Harish-Chandra module, and $A_{\omega}^{\delta}$ (as vector space) as the natural module of the inner derivation algebra ad$(A_{\omega}^{\delta})$ is an
intermediate series module.

In this section we construct infinite dimensional  3-Lie algebra $A_{\omega}^{\delta}$-module $T_{\lambda\mu}$, and discuss the relation between 3-Lie algebra $A_{\omega}^{\delta}$-modules and induced modules of inner derivation algebra ad$(A_{\omega}^{\delta})$.

In the following of the paper, suppose
\begin{equation}\label{eq:V}
V=\sum\limits_{\alpha\in \mathbb F}\mathbb F v_{\alpha},
\end{equation} is a vector space over  $\mathbb F$ with a basis $\{ v_{\alpha} ~~ | ~~ \alpha\in \mathbb  F\}$,  and for $\forall \alpha\in \mathbb F$, denotes~
\begin{equation}\label{eq:U} U(\alpha)= \sum\limits_{m\in \mathbb Z}\mathbb F v_{\alpha+ m}
\end{equation} the subspace of $V$.

\vspace{5mm}\subsection{3-Lie algebra $ A_{\omega}^{\delta}$-module $T_{\lambda\mu}$}

For some $\lambda, \mu\in \mathbb F$, define the linear mapping $\rho_{\lambda\mu}:  A_{\omega}^{\delta} \wedge  A_{\omega}^{\delta}\rightarrow gl (V)$  by
\begin{equation}
 \begin{cases}
 \rho_{\lambda\mu}(L_r, M_s)v_\alpha= (\lambda+\alpha+(s-r)\mu)v_{\alpha+s-r}, ~~~~r, s\in  \mathbb Z, \alpha\in\mathbb F,
\\
\rho_{\lambda\mu}(L_r, L_s) v_\alpha~~=~~0,\quad ~r, ~~s\in \mathbb Z,  \alpha\in\mathbb F,\\
\rho_{\lambda\mu}(M_r, M_s)v_\alpha=  0, \quad ~r, ~~s\in\mathbb Z,  \alpha\in\mathbb F. \\
\end{cases}\\
\label{eq:one}
\end{equation}

The pair $(V, \rho_{\lambda\mu})$ is  denoted by $T_{\lambda\mu}$.

\begin{theorem}\label{thm:T1} By the above notations, $T_{\lambda\mu}$ is a 3-Lie algebra $ A_{\omega}^{\delta}$-module if and only if $\mu=0$ or $\mu=1$.
And in the case  $\mu=0$ or $\mu=1$,

1) $T_{\lambda\mu}$ is a reducible module with submodules  $ U(\alpha)$~ for all $\alpha\in \mathbb F$;

2) if $\mu=0$ ,~then $\mathbb F v_{-\lambda}$~is a trivial submodule of $U(-\lambda)$;

3) if~$\mu =1$, $\alpha+ \lambda\in Z$, $\alpha\in \mathbb F$, then $\overline{U_\alpha}= \sum\limits_{m\in Z_{\neq -\lambda-\alpha}}\mathbb F v_{\alpha+ m}$ is an irreducible submodule of $U(\alpha)$;

4) for $\alpha\in \mathbb F$, $ U(\alpha)$~ is  irreducible  if and only if $\alpha+ \lambda\notin Z$.

\end{theorem}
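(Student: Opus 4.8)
The plan is to verify the two module axioms \eqref{eq:mod1} and \eqref{eq:mod2} directly on basis elements, using the multiplication table \eqref{eq:table}, and to see precisely which powers of $\mu$ are forced. Since $\rho_{\lambda\mu}$ kills $(L_r,L_s)$ and $(M_r,M_s)$, the only nonzero operators are the $\rho_{\lambda\mu}(L_r,M_s)$, so many instances of the axioms are trivially $0=0$; I would enumerate the surviving cases by how many $L$'s and $M$'s occur among the four arguments. For \eqref{eq:mod1} with arguments $(L_a,M_b;L_c,M_d)$, the left side is a commutator of two operators each acting as $v_\alpha\mapsto(\lambda+\alpha+(\ast)\mu)v_{\alpha+\ast}$; a short computation shows $[\rho(L_a,M_b),\rho(L_c,M_d)]v_\alpha = (\text{polynomial in }\alpha,\mu)\,v_{\alpha+(b-a)+(d-c)}$, while the right side involves $\rho([L_a,M_b,L_c],M_d)+\rho(L_c,[L_a,M_b,M_d])$, which by \eqref{eq:table} is $(c-a)\rho(L_{a+c-b},M_d) - (d-b)\rho(L_c,M_{b+d-a})$ up to signs; matching coefficients will produce the constraint on $\mu$. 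I expect \eqref{eq:mod2}, with arguments such as $(L_a,M_b,L_c;M_d)$ and $(L_a,M_b,M_c;L_d)$, to be the step that actually pins down $\mu^2=\mu$, i.e. $\mu\in\{0,1\}$: here one gets a quadratic expression in $\mu$ on one side (from the product of two $\rho$'s) versus a linear one on the other, and equality for all $\alpha$ forces the coefficient of $\mu^2-\mu$ to vanish. Conversely, plugging $\mu=0$ and $\mu=1$ back in, one checks both identities hold, establishing the ``if''.

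Once $T_{\lambda\mu}$ is a module for $\mu\in\{0,1\}$, part~1) is immediate: formula \eqref{eq:one} shows $\rho_{\lambda\mu}(L_r,M_s)$ shifts the index $\alpha$ by the integer $s-r$ and annihilates the other generator pairs, so each $U(\alpha)=\sum_{m\in\mathbb Z}\mathbb F v_{\alpha+m}$ is invariant; invariance is exactly the statement that the index-shift stays within a fixed coset of $\mathbb Z$ in $\mathbb F$. For part~2), when $\mu=0$ the scalar is $\lambda+\alpha$, which vanishes precisely at $\alpha=-\lambda$, so $\rho_{\lambda 0}(L_r,M_s)v_{-\lambda}=(\lambda+(-\lambda))v_{-\lambda+s-r}=0$; hence $\mathbb F v_{-\lambda}$ is a trivial submodule. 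For part~3), when $\mu=1$ the coefficient is $\lambda+\alpha+(s-r)$, and on $U(\alpha)$ with $\alpha+\lambda\in\mathbb Z$ write $\alpha+m$ for the generic index; the action sends $v_{\alpha+m}\mapsto(\lambda+\alpha+m')v_{\alpha+m'}$ where $m'=m+(s-r)$, which vanishes exactly when $m'=-\lambda-\alpha$. Thus the single vector $v_{\alpha+(-\lambda-\alpha)}=v_{-\lambda}$ spans a submodule, and I claim the complement $\overline{U_\alpha}=\sum_{m\neq-\lambda-\alpha}\mathbb F v_{\alpha+m}$ is irreducible: given any nonzero $w\in\overline{U_\alpha}$, pick a $v_{\alpha+m}$ occurring in $w$ with nonzero coefficient and $m\neq-\lambda-\alpha$; applying suitable $\rho_{\lambda 1}(L_r,M_s)$ with nonvanishing scalars moves $w$ to a vector with a different ``top'' index, and a standard induction/elimination argument (using that only finitely many indices appear and the shift operators are ``transitive'' on $\{m\neq-\lambda-\alpha\}$) shows the submodule generated by $w$ contains every $v_{\alpha+m}$ with $m\neq-\lambda-\alpha$.

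Part~4) is the two-sided statement for the $\mu$ implicit in the hypothesis (and should be read for both admissible values, or with $\mu$ as in the running assumption): if $\alpha+\lambda\notin\mathbb Z$, then for $\mu=1$ the scalar $\lambda+\alpha+(s-r)$ is never zero on $U(\alpha)$ (it would require $\lambda+\alpha\in\mathbb Z$), and likewise for $\mu=0$ the scalar $\lambda+\alpha$ is never zero; so every shift operator acts invertibly on the relevant $1$-dimensional pieces, and the same generation argument as in 3) — now with no excluded index — shows any nonzero vector generates all of $U(\alpha)$, giving irreducibility. Conversely, if $\alpha+\lambda\in\mathbb Z$ then parts~2) and 3) exhibit a proper nonzero submodule of $U(\alpha)$ (either $\mathbb F v_{-\lambda}$ or $\overline{U_\alpha}$), so $U(\alpha)$ is reducible. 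The main obstacle is the first step: organizing the case analysis for \eqref{eq:mod1}–\eqref{eq:mod2} so that the algebra is transparent and the emergence of the condition $\mu\in\{0,1\}$ is clean rather than buried in sign bookkeeping; once that is done, parts~1)–4) are short consequences of the explicit shift-and-scale form of $\rho_{\lambda\mu}$.
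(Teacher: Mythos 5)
Your overall route is the same as the paper's: check \eqref{eq:mod1} and \eqref{eq:mod2} on basis elements, observe that only pairs of type $(L,M)$ act nontrivially, and locate the constraint on $\mu$ in \eqref{eq:mod2}. Your prediction is accurate: \eqref{eq:mod1} in fact imposes no condition (both sides come out to $(r-s+l-m)\bigl(\lambda+\alpha+(l-r-s+m)\mu\bigr)v_{\alpha+l-r-s+m}$ for every $\mu$), while \eqref{eq:mod2} produces a discrepancy of the form $(s-r)(l-m)\,\mu(1-\mu)\,v_{\alpha+l-r-s+m}$ between the two sides, which vanishes for all $r,s,m,l$ exactly when $\mu\in\{0,1\}$. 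However, you only \emph{announce} this computation (``I expect \dots to pin down $\mu^2=\mu$''); since the ``only if'' direction of the main claim lives entirely in that calculation, it must actually be carried out for the proof to be complete. The treatment of 1), 2) and 4) matches the paper's (shift-by-integers gives invariance of $U(\alpha)$; the scalar $\lambda+\alpha$, resp.\ $\lambda+\alpha+m'$, is the whole story), and your appeal to the distinct eigenvalues of the shift operators to show that a nonzero vector generates everything is the standard argument the paper also leaves implicit.

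There is one genuine error in your part 3). For $\mu=1$ the coefficient $\lambda+\alpha+m'$ depends on the \emph{target} index, so $v_{-\lambda}$ is never in the image of any $\rho_{\lambda 1}(L_r,M_s)$ --- that is what makes $\overline{U_\alpha}$ invariant --- but $v_{-\lambda}$ itself is \emph{not} annihilated: $\rho_{\lambda 1}(L_r,M_s)v_{-\lambda}=(s-r)v_{-\lambda+s-r}\neq 0$ for $s\neq r$. So $\mathbb F v_{-\lambda}$ is not a submodule when $\mu=1$, and $\overline{U_\alpha}$ is not a ``complement'' in the sense of a direct summand; rather $U(\alpha)\supset\overline{U_\alpha}$ with a one-dimensional trivial \emph{quotient}, and $U(\alpha)$ is indecomposable. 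This misstatement does not damage any of the four assertions of the theorem (part 2 concerns only $\mu=0$, and part 3 and the converse of part 4 only need $\overline{U_\alpha}$ to be a proper nonzero submodule), but it should be corrected.
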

\begin{proof} $\bullet$ First, we  prove that  $T_{\lambda\mu}$ is an $A_{\omega}^{\delta}$-module if and only if $\mu=0$ or $1$.

For $\forall r, m, s, l\in Z, \alpha, \lambda\in\mathbb F$, and  $\mu=0$  or $1$, thanks to  Eqs \eqref{eq:table} and \eqref{eq:one}, and a direct computation,
$$ [\rho_{\lambda\mu} (L_r, M_m), \rho_{\lambda\mu} (L_s, M_l)]v_\alpha
= \rho_{\lambda\mu} (L_r, M_m)\rho_{\lambda\mu} (L_s, M_l)v_\alpha-\rho_{\lambda\mu} (L_s, M_l)\rho_{\lambda\mu} (L_r, M_m)v_\alpha$$

\vspace{3mm}$\hspace{5.3cm}= (\lambda+\alpha+(l-s+m-r)\mu)(l-s-m+r)v_{\alpha-s+l-r+m},$

$$\rho_{\lambda\mu} ([L_r, M_m, L_s], M_l)+ \rho_{\lambda\mu} (L_s, [L_r, M_m, M_l])v_\alpha$$
$$=(r-s+l-m)(\lambda+\alpha+(l-r-s+m)\mu)v_{\alpha-s+m+l-r}.$$

Therefore,
\begin{eqnarray*}
[\rho_{\lambda\mu} (L_r, M_m), \rho_{\mu\nu} (L_s, M_l)]= \rho_{\mu\nu} ([L_r, M_m, L_s], M_l)+ \rho_{\mu\nu} (L_s, [L_r, M_m, M_l]).
\end{eqnarray*}
It is clear that

\begin{eqnarray*}
[\rho_{\lambda\mu} (L_r, L_m), \rho_{\mu\nu} (L_s, L_l)]= \rho_{\lambda\mu} ([L_r, L_m, L_s], L_l)+ \rho_{\lambda\mu} (L_s, [L_r, L_m, L_l])=0,
\end{eqnarray*}
\begin{eqnarray*}
[\rho_{\lambda\mu} (M_r, M_m), \rho_{\lambda\mu} (M_s, M_l)]= \rho_{\lambda\mu}([M_r, M_m, M_s], M_l)+ \rho_{\lambda\mu} (M_s, [M_r, M_m, M_l])=0.
\end{eqnarray*}
Therefore, Eq \eqref{eq:mod1} holds.

By Eqs \eqref{eq:table} and \eqref{eq:one}, for   $\forall m, l, r, s\in Z, \alpha\in \mathbb F$,
$$
\rho_{\lambda\mu}([L_r, L_s, M_m], M_l)v_\alpha =(s-r)(\lambda+\alpha+(l-r-s+m)\mu)v_{\alpha+l-r-s+m},
$$

$$
(\rho_{\lambda\mu} (L_r, L_s)\rho_{\lambda\mu} (M_m, M_l)+ \rho_{\lambda\mu} (L_s, M_m)\rho_{\lambda\mu} (L_r, M_l)+ \rho_{\lambda\mu} (M_m, L_r)\rho_{\lambda\mu} (L_s, M_l))v_\alpha$$
$$=
(\lambda+\alpha+(l-r)\mu)(\lambda+\alpha+l-r+(m-s)\mu)v_{\alpha+l-r+m-s}
$$
$$
-(\lambda+\alpha+(l-s)\mu)(\lambda+\alpha+l-s+(m-r)\mu)v_{\alpha+l-r+m-s}
$$
$$
=(s-r)(\lambda+\alpha+(l-r-s+m)\mu+(l-m)\mu(1-\mu))v_{\alpha+l-r-s+m};$$

$$\rho_{\lambda\mu}([M_m, M_l, L_r], L_s)=(m-l)(\lambda+\alpha+(m+l-r-s)\mu)v_{\alpha+m+l-r-s},$$

$$
\rho_{\lambda\mu} (M_l, L_r)\rho_{\lambda\mu} (M_m, L_s)+ \rho_{\lambda\mu} (L_r, M_m)\rho_{\lambda\mu} (M_l, L_s)+ \rho_{\lambda\mu} (M_m, M_l)\rho_{\lambda\mu} (L_r, L_s))v_\alpha$$
$$=(m-l)(\lambda+\alpha+(m+l-2s)\mu+(s-r)\mu^2)v_{\alpha+m+l-r-s}.
$$

From the above discussion, $\forall m, l, r, s\in Z, \alpha\in \mathbb F$, the identities

\vspace{3mm}\hspace{5mm}$
\rho_{\lambda\mu}([L_r, L_s, M_m], M_l)= \rho_{\lambda\mu} (L_r, L_s)\rho_{\lambda\mu} (M_m, M_l)+ \rho_{\lambda\mu} (L_s, M_m)\rho_{\lambda\mu} (L_r, M_l)$

\vspace{3mm}\hspace{4.4cm}$
+ \rho_{\lambda\mu} (M_m, L_r)\rho_{\lambda\mu} (L_s, M_l)
$
\quad and

\vspace{3mm}\hspace{5mm}$
\rho_{\lambda\mu}([M_m, M_l, L_r], L_s)=\rho_{\lambda\mu} (M_m, M_l)\rho_{\lambda\mu} (L_r, L_s)+ \rho_{\lambda\mu} (M_l, L_r)\rho_{\lambda\mu} (M_m, L_s)$

\vspace{3mm}\hspace{4.4cm}$ + \rho_{\lambda\mu} (L_r, M_m)\rho_{\lambda\mu} (M_l, L_s)
$

\vspace{3mm}\noindent hold if and only if $\mu=0$ or $\mu=1$, that is,
 \eqref{eq:mod2} holds if and only  $\mu=0$ or $\mu=1$.

 Therefore,  $T_{\lambda\mu}$ is an $A_{\omega}^{\delta}$-module if and only if $\mu=0$ or $\mu=1$.

\vspace{2mm}$\bullet\bullet$   Thanks to  \eqref{eq:one}, for $\forall \alpha\in \mathbb F$,  $U(\alpha)= \sum\limits_{m\in \mathbb Z}\mathbb F v_{\alpha+ m}$ is a submodule of $T_{\lambda\mu}$, we get 1).

If $\mu= 0$, then
\begin{equation}\label{eq:mu0}
\rho_{\lambda0}(L_r, M_s)v_{-\lambda}=0, \forall r, s\in Z,
\end{equation}~
it follows that $ \mathbb F v_{-\lambda}$~is a trivial submodule. It follows 2).

If~$\mu= 1$, then for all $ v_{\alpha+m}, v_{\alpha+ m'}\in U(\alpha)$,  $ m\neq m'\in Z$, there are distinct $r, s\in Z$, such that~$m'=m+s-r$. By \eqref{eq:one}
\begin{equation}\label{eq:m'}\rho_{\lambda1}(L_r, M_s)v_{\alpha+ m}= (\lambda+\alpha+m') v_{\alpha+m'}.
\end{equation}
Then in the case ~$\alpha+\lambda\in Z$, for $\forall m'\neq -\lambda-\alpha$, $\lambda+\alpha+m'\neq 0$. Thanks to \eqref{eq:m'}, $\overline{U_\alpha}= \sum\limits_{m\in Z_{\neq -\lambda-\alpha}}\mathbb F v_{\alpha+ m}$ is
an irreducible submodule of $U(\alpha)$. We get 3).

\vspace{2mm}$\bullet\bullet\bullet$   For $\forall \alpha\in \mathbb F_{\neq -\lambda}$,  $\mu=0,$ and
~$ v_{\alpha+m}, v_{\alpha+ m'}\in U(\alpha)$, then  $m\neq m'\in Z$,  and there are distinct  $r, s\in Z$ such that~$m'=m+s-r$. By \eqref{eq:one}
$$\rho_{\lambda 0}(L_r, M_s)v_{\alpha+ m}= (\lambda+\alpha+m) v_{\alpha+m'}.$$

If $\alpha+\lambda\notin Z$, then $\lambda+\alpha+m\neq 0$,~and $U(\alpha)=\rho_{\lambda0}(A_p, A_p)v_{\alpha+m}$,
it follows that ~$U(\alpha)$~is irreducible.

If ~$\alpha+\lambda\in Z$, then there are distinct  $r, s\in Z$, such that $\alpha+\mu=r-s$. Thanks to \eqref{eq:one}
$$\rho_{\lambda0}(L_r, M_s)v_{\alpha}=(\alpha+\lambda)v_{-\lambda}.$$
Therefore, ~$v_{-\lambda}\in U(\alpha)$. Thanks to \eqref{eq:mu0}, $\mathbb F v_{-\lambda}$ is a trivial submodule of  $U(\alpha)$.

If $\mu=1$ and $\alpha+\lambda\notin Z$, then for $ \forall v_{\alpha+m}, v_{\alpha+ m'}\in U(\alpha)$, $ m\neq m'\in Z$, $\lambda+\alpha+m'\neq 0$,~
$$U(\alpha)=\rho_{\lambda 1}(A_p, A_p)v_{\alpha+m},$$
it follows that ~$U(\alpha)$~is irreducible.

Summarizing the above discussion, we get 4). The proof is complete.
\end{proof}

\begin{theorem} \label{thm:T11}
The module $T_{\lambda0}$ and   $T_{\lambda1}$ are
intermediate series modules of $A_{\omega}^{\delta}$.
\end{theorem}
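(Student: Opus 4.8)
The plan is to produce an explicit Cartan subalgebra of $A_\omega^\delta$ with respect to which the spaces underlying $T_{\lambda0}$ and $T_{\lambda1}$ decompose, and then to read off directly from \eqref{eq:one} that every weight space is one-dimensional. I would take $H=\mathbb F L_0+\mathbb F M_0$. From the multiplication table \eqref{eq:table} one checks that $[L_0,M_0,L_0]=[L_0,M_0,M_0]=0$ and that every triple bracket built from two copies of $L_0$ or of $M_0$ vanishes, so $H$ is an abelian, hence nilpotent, $3$-Lie subalgebra; and if $y=\sum_r a_rL_r+\sum_r b_rM_r$ satisfies $[y,H,A_\omega^\delta]\subseteq H$, then reading off $[y,L_0,M_t]$ and $[y,M_0,L_t]$ from \eqref{eq:table} (using $[L_r,L_0,M_t]=-rL_{r-t}$, $[M_r,M_0,L_t]=-rM_{r-t}$, and the like) forces $a_r=b_r=0$ for all $r\neq0$, so $y\in H$. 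Hence $H$ is a Cartan subalgebra of $A_\omega^\delta$ (the one already used in \cite{Bai6}). By Theorem \ref{thm:T1}, $T_{\lambda0}$ and $T_{\lambda1}$ are genuine $A_\omega^\delta$-modules, so it remains only to analyse their $H$-weights.

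Since $\dim H=2$, the space $H\wedge H$ is one-dimensional, spanned by $L_0\wedge M_0$. Putting $r=s=0$ in \eqref{eq:one}, so that the term $(s-r)\mu$ drops out whatever $\mu$ is, gives $\rho_{\lambda\mu}(L_0,M_0)v_\alpha=(\lambda+\alpha)v_\alpha$, while $\rho_{\lambda\mu}(L_0,L_0)=\rho_{\lambda\mu}(M_0,M_0)=0$ by \eqref{eq:one}. Thus each basis vector $v_\alpha$ is a common eigenvector for the action of $H\wedge H$: defining $\lambda_\alpha\in(H\wedge H)^*$ by $\lambda_\alpha(L_0,M_0)=\lambda+\alpha$, we have $\rho_{\lambda\mu}(h_1,h_2)v_\alpha=\lambda_\alpha(h_1,h_2)v_\alpha$ for all $h_1,h_2\in H$, i.e. $v_\alpha\in V_{\lambda_\alpha}$. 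In particular $V=\sum_{\alpha\in\mathbb F}\mathbb F v_\alpha$ is a sum of weight spaces, so $T_{\lambda\mu}$ for $\mu\in\{0,1\}$ is a weight module for $H$.

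To finish, observe that $\alpha\mapsto\lambda+\alpha$ is injective on $\mathbb F$, so distinct basis vectors carry distinct weights $\lambda_\alpha$; equivalently $\rho_{\lambda\mu}(L_0,M_0)$ is a diagonalizable operator on $V$ with pairwise distinct eigenvalues. Consequently each weight space is exactly $V_{\lambda_\alpha}=\mathbb F v_\alpha$, the decomposition $V=\bigoplus_{\alpha\in\mathbb F}V_{\lambda_\alpha}$ holds, and every weight space is one-dimensional. Therefore $T_{\lambda0}$ and $T_{\lambda1}$ are Harish-Chandra modules all of whose weight spaces have dimension one, that is, intermediate series modules of $A_\omega^\delta$.

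I do not expect a serious obstacle: once the correct Cartan subalgebra is in place, the argument is immediate from \eqref{eq:one}. The only points needing genuine (if routine) care are choosing $H$ two-dimensional so that $H\wedge H\neq0$ and the weight-module notion is non-degenerate; verifying that this $H$ really meets the definition of Cartan subalgebra given above; and noting that the eigenvalues $\lambda+\alpha$ are pairwise distinct, which is exactly what prevents two basis vectors from falling into the same weight space and is what upgrades ``Harish-Chandra'' to ``intermediate series.''
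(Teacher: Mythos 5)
Your proof is correct and follows essentially the same route as the paper: both take $H=\mathbb F L_0+\mathbb F M_0$ as the Cartan subalgebra and read off from \eqref{eq:one} that $\rho_{\lambda\mu}(L_0,M_0)v_\alpha=(\lambda+\alpha)v_\alpha$, so that $V=\sum_{\alpha}\mathbb F v_\alpha$ is the weight space decomposition with all weight spaces one-dimensional. The extra details you supply (verifying that $H$ is indeed a Cartan subalgebra and that the weights $\lambda+\alpha$ are pairwise distinct) are correct and only make explicit what the paper leaves to the reader.
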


\begin{proof}
By \eqref{eq:table},
~~$H=\mathbb F L_0+\mathbb F M_0$ ~~ is a Cartan subalgebra of  the 3-Lie algebra  $A_{\omega}^{\delta}$. Thanks to \eqref{eq:one}, for $\forall \alpha\in \mathbb F,$
$$
\rho_{\lambda\mu}(L_0, M_0) v_{\alpha}
=(\lambda+\alpha)v_{\alpha}.
$$

Therefore, for $\forall \alpha\in \mathbb F$, $V_{\lambda+\alpha}=\mathbb F v_{\alpha}$ is the $\lambda+\alpha$-weight space with dimension one, and $V=\sum\limits_{\alpha\in \mathbb F} V_{\lambda+\alpha}$. The result follows.
\end{proof}

\subsection {Induced modules of 3-Lie algebras}

Let $B$ be a 3-Lie algebra over $\mathbb F$, $(W, \rho)$ be a 3-Lie algebra  $B$-module, where
$$\rho: B\wedge B\rightarrow gl(W).$$
Thanks to \eqref{eq:mod1} and \eqref{eq:mod2},
$$L_{\rho}(W)=\{~~ \rho(x, y): W\rightarrow W~~ | ~~\forall x, y\in B~~\}$$
is a subalgebra of $gl(W)$.

For $x, y, x', y'\in B$, if ad$(x, y)$=ad$(x', y')$, that is, $$[x, y, z]=[x', y', z], ~~ \forall z\in B,$$
then, for all $s, t\in M$,
$$[\rho(x, y)-\rho(x', y'), \rho(s, t)]=\rho([x, y, s], t)-\rho([x', y', s], t)+\rho(s, [x, y, t])-\rho(s, [x', y', t])=0,$$
it follows that $\rho(x, y)-\rho(x', y')$ is in the center of the Lie algebra $L_{\rho}(W)$, that is,
$$\rho(x, y)-\rho(x', y')\in Z(L_{\rho}(W)).$$
Therefore, in the case  $Z(L_{\rho}(W))=0$, if   ad$(x, y)$=ad$(x', y')$, then  $\rho(x, y)$$=\rho(x', y')$.

Therefore, if  $Z(L_{\rho}(W))=0$, we can define $\mathbb F$-linear mapping

\begin{equation}\label{eq:bar}
\bar{\rho}: \mbox{ad}(B)\rightarrow gl(W),\quad
\bar{\rho}(\mbox{ad}(x, y))=\rho(x, y), ~~ \forall x, y\in B.
\end{equation}

We get the following result.

\begin{theorem}\label{thm:barrho}
Let $B$ be a 3-Lie algebra over $\mathbb F$, $(W, \rho)$ be a 3-Lie algebra $B$-module, and $Z(L_{\rho}(W))=0$. Then $(W, \bar{\rho})$ is a Lie algebra ad$(L)$-module, where
$\bar{\rho}$ is defined by \eqref{eq:bar}.
\end{theorem}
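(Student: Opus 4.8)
The plan is to establish two things. First, that the recipe \eqref{eq:bar} really does define a linear map on all of $\mathrm{ad}(B)$, and not merely on the set of left multiplications: a general element of $\mathrm{ad}(B)$ is a finite sum $\sum_i\mathrm{ad}(x_i,y_i)$, and I must show the operator $\sum_i\rho(x_i,y_i)$ depends only on that sum. Second, that the resulting $\bar\rho\colon\mathrm{ad}(B)\to gl(W)$ is a homomorphism of Lie algebras, which is precisely what it means for $(W,\bar\rho)$ to be an $\mathrm{ad}(B)$-module.

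For well-definedness I would extend to finite sums the computation carried out just above the theorem. Assume $u:=\sum_i\mathrm{ad}(x_i,y_i)-\sum_j\mathrm{ad}(x_j',y_j')=0$ in $\mathrm{ad}(B)$, and put $P:=\sum_i\rho(x_i,y_i)-\sum_j\rho(x_j',y_j')\in L_{\rho}(W)$. For all $s,t\in B$, bilinearity of $\rho$ together with \eqref{eq:mod1} gives
\begin{equation*}
[P,\rho(s,t)]=\rho\big(u(s),t\big)+\rho\big(s,u(t)\big)=0 ,
\end{equation*}
so $P$ commutes with every $\rho(s,t)$, hence with all of $L_{\rho}(W)$, and therefore $P\in Z(L_{\rho}(W))=0$. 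Consequently $\bar\rho\big(\sum_i\mathrm{ad}(x_i,y_i)\big):=\sum_i\rho(x_i,y_i)$ is unambiguous, and it is $\mathbb F$-linear by construction.

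For the homomorphism property, since $\mathrm{ad}(B)$ is spanned over $\mathbb F$ by left multiplications and both sides are bilinear, it suffices to check $\bar\rho\big([\mathrm{ad}(x_1,x_2),\mathrm{ad}(x_3,x_4)]\big)=[\bar\rho(\mathrm{ad}(x_1,x_2)),\bar\rho(\mathrm{ad}(x_3,x_4))]$ for all $x_1,x_2,x_3,x_4\in B$. Evaluating the commutator of left multiplications on an arbitrary $x_5\in B$ and applying the fundamental identity \eqref{eq:jacobi} shows that, inside $\mathrm{ad}(B)$,
\begin{equation*}
[\mathrm{ad}(x_1,x_2),\mathrm{ad}(x_3,x_4)]=\mathrm{ad}([x_1,x_2,x_3],x_4)+\mathrm{ad}(x_3,[x_1,x_2,x_4]) .
\end{equation*}
Applying the (now well-defined) $\bar\rho$ and using \eqref{eq:bar} yields $\rho([x_1,x_2,x_3],x_4)+\rho(x_3,[x_1,x_2,x_4])$, which by \eqref{eq:mod1} is exactly $[\rho(x_1,x_2),\rho(x_3,x_4)]$. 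This is the desired identity, so $\bar\rho$ is a Lie algebra homomorphism and $(W,\bar\rho)$ is an $\mathrm{ad}(B)$-module.

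The one delicate point is the first step: the hypothesis $Z(L_{\rho}(W))=0$ is used exactly to collapse the a priori ambiguity in $\sum_i\rho(x_i,y_i)$ so that $\bar\rho$ is meaningful; without it the statement would fail. Once well-definedness is secured, the homomorphism property is a formal consequence of \eqref{eq:jacobi} and the module axiom \eqref{eq:mod1}, and one notices that the second axiom \eqref{eq:mod2} plays no role in this theorem.
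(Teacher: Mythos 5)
Your proof is correct and follows essentially the same route as the paper: well-definedness of $\bar\rho$ via the hypothesis $Z(L_{\rho}(W))=0$, then the homomorphism property from the fundamental identity and the module axiom \eqref{eq:mod1}. In fact you are slightly more careful than the paper, which only checks well-definedness when a single $\mathrm{ad}(x,y)$ equals a single $\mathrm{ad}(x',y')$, whereas you correctly handle arbitrary finite linear combinations, which is what a general element of $\mathrm{ad}(B)$ is.
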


\begin{proof} Thanks to \eqref{eq:mod1} and \eqref{eq:mod2}, for $\forall x, y, x', y'\in B$,

\vspace{3mm}\hspace{5mm}$[\bar{\rho}(\mbox{ad}(x, y)), \bar{\rho}(\mbox{ad}(x', y'))]$

\vspace{3mm}$=\bar{\rho}(\mbox{ad}(x, y))\bar{\rho}(\mbox{ad}(x', y'))-\bar{\rho}(\mbox{ad}(x', y')) \bar{\rho}(\mbox{ad}(x, y))
$

\vspace{3mm}$
=\rho(x, y)\rho(x', y')-\rho(x', y') \rho(x, y)=[\rho(x, y),\rho(x', y')]$

\vspace{3mm}$
=\rho([x, y, x'], y')+\rho(x', [x, y, y'])$

\vspace{3mm}$=\bar{\rho}(\mbox{ad}([x, y, x'], y'))+\bar{\rho}(\mbox{ad}(x', [x, y, y']))=\bar{\rho}\big([\mbox{ad}(x, y), \mbox{ad}(x', y')]\big).
$

Therefore, $(W, \bar{\rho})$ is  the inner derivation algebra ad$(B)$-module.
\end{proof}

The  inner derivation algebra ad$(B)$-module $(W, \bar{\rho})$ is called {\it the induced module} associated with the 3-Lie algebra $B$-module $(W, \rho)$, where
$\bar{\rho}$ is defined by \eqref{eq:bar},  and $(W, \bar{\rho})$ is simply called  {\it the induced module}.

\begin{lemma}\cite{Bai6}\label{lemma:1} The inner derivation algebra  ad$(A_{\omega}^{\delta})$ of the 3-Lie algebra $A_{\omega}^{\delta}$ has a basis  $\{~~p_r, q_r, x_r, z_r| r\in Z~~\}$, where
 \begin{equation}\label{eq:pqxzr}
\begin{cases}
p_r=\frac{1}{2}(ad(L_0, M_{-r})+ad(L_r, M_0)),~~ 0\neq r\in Z,\\
 q_r=\frac{1}{r}(ad(L_0, M_{-r})-ad(L_r, M_0)), ~~ 0\neq r\in Z, \\
x_r=\frac{1}{r}ad(L_r, L_{0}),~~ ~~~~z_r=\frac{1}{-r}ad(M_{-r}, M_0), ~~ 0\neq r\in Z,\\
p_0=ad(L_0, M_{0}), ~~~~~~~ q_0=ad(L_0, M_{0})-ad(L_1, M_1),\\
x_0=\frac{1}{2}ad(L_1, L_{-1}),~~~ z_0=\frac{1}{2}ad(M_{1}, M_{-1});
\end{cases}
\end{equation}
and the multiplication of   ad$(A_{\omega}^{\delta})$  in the basis is as follows:
 \begin{equation}\label{eq:adA}
\begin{cases}
[p_r, p_s]= (r-s)p_{r+s}, ~~ [p_r, q_s]=-sq_{r+s},\\
[p_r, x_s]= -sx_{r+s}, ~~[p_r, z_s]=-sz_{r+s},\\
[q_r, x_s]= -2x_{r+s},~~ [q_r, z_s]= 2z_{r+s}, \\
 [z_r, x_s]= q_{r+s},\\
  \mbox{others are zero, where }~~ r, s\in Z.
\end{cases}
\end{equation}
\end{lemma}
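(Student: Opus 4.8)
The plan is to exhibit a concrete spanning set of $\mathrm{ad}(A_{\omega}^{\delta})$, cut it down to the claimed basis by a grading argument, and then read off the structure constants by elementary bracket computations.

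First I would record the action of the left multiplications on the basis $\{L_k,M_k\mid k\in Z\}$ of $A_{\omega}^{\delta}$. From \eqref{eq:table} a direct calculation gives $\mathrm{ad}(L_i,L_j)L_k=0$, $\mathrm{ad}(L_i,L_j)M_k=(j-i)L_{i+j-k}$; $\mathrm{ad}(M_i,M_j)M_k=0$, $\mathrm{ad}(M_i,M_j)L_k=(j-i)M_{i+j-k}$; and $\mathrm{ad}(L_i,M_j)L_k=(i-k)L_{i+k-j}$, $\mathrm{ad}(L_i,M_j)M_k=(k-j)M_{j+k-i}$. Since $\mathrm{ad}$ is $\mathbb F$-bilinear and alternating, $\mathrm{ad}(A_{\omega}^{\delta})$ is spanned by these three families of operators.

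The key step is a grading by $p_0=\mathrm{ad}(L_0,M_0)$. From the formulas, $p_0L_k=-kL_k$ and $p_0M_k=kM_k$, and using \eqref{eq:jacobi} one obtains $[p_0,\mathrm{ad}(L_i,L_j)]=-(i+j)\mathrm{ad}(L_i,L_j)$, $[p_0,\mathrm{ad}(M_i,M_j)]=(i+j)\mathrm{ad}(M_i,M_j)$ and $[p_0,\mathrm{ad}(L_i,M_j)]=(j-i)\mathrm{ad}(L_i,M_j)$. Hence $\mathrm{ad}(A_{\omega}^{\delta})$ is the direct sum of $\mathrm{ad}(p_0)$-eigenspaces with integer eigenvalues, and the eigenvalue $-r$ piece is spanned by the $\mathrm{ad}(L_i,L_j)$ with $i+j=r$, the $\mathrm{ad}(M_i,M_j)$ with $i+j=-r$, and the $\mathrm{ad}(L_i,M_j)$ with $i-j=r$. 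In the first family the operator depends on $(i,j)$ only through the scalar $j-i$, so it spans a one-dimensional space, a generator being $x_r$ from \eqref{eq:pqxzr} (one checks $x_rM_k=-L_{r-k}$, $x_rL_k=0$); similarly the second family spans one dimension, generator $z_r$ ($z_rL_k=-M_{-r-k}$, $z_rM_k=0$); in the third family the coefficient $i-k$ is affine in the single free parameter $i$, so it spans at most two dimensions, generators $p_r$ and $q_r$. Therefore $\{p_r,q_r,x_r,z_r\mid r\in Z\}$ spans $\mathrm{ad}(A_{\omega}^{\delta})$. For linear independence: elements of distinct degree lie in distinct eigenspaces; within a fixed degree, $x_r$ is, up to a scalar, the unique element annihilating $\sum_k\mathbb F L_k$ and mapping into it, $z_r$ the unique one annihilating $\sum_k\mathbb F M_k$ and mapping into it, whereas $p_r$ and $q_r$ both stabilise $\sum_k\mathbb F L_k$ but act on $L_k$ with coefficients $\frac r2-k$ and $-1$, which are linearly independent as functions of $k$; and each of the four operators is nonzero. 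This proves that $\{p_r,q_r,x_r,z_r\mid r\in Z\}$ is a basis, i.e. \eqref{eq:pqxzr}.

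Finally I would verify the multiplication table \eqref{eq:adA}. Each bracket of two basis elements is again in $\mathrm{ad}(A_{\omega}^{\delta})$ and, by the grading, lies in a single eigenspace, so it is enough to apply it to $L_k$ and $M_k$ using the explicit actions above and re-expand in $\{p_n,q_n,x_n,z_n\}$; for instance $[x_r,z_s]L_k=L_{r+s+k}$ and $[x_r,z_s]M_k=-M_{k-r-s}$, which is precisely $-q_{r+s}$, i.e. $[z_r,x_s]=q_{r+s}$, and a parallel computation gives $[p_r,p_s]=(r-s)p_{r+s}$, the remaining brackets coming out as listed (many of them zero). The part I expect to be the real work is this last one: it is routine but lengthy, since one must run through the bracket types $[p,p],[p,q],[p,x],[p,z],[q,x],[q,z],[z,x]$ as well as $[q,q],[x,x],[z,z]$, and because $p_0,q_0,x_0,z_0$ in \eqref{eq:pqxzr} are given by formulas of a shape different from the $r\neq0$ ones, the degree-zero instances have to be checked separately; the grading itself is what keeps the bookkeeping under control.
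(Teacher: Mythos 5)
The paper does not actually prove this lemma: it is imported verbatim from \cite{Bai6} with only a citation, so there is no in-paper argument to compare yours against. Your self-contained proof is correct. The action formulas you derive from \eqref{eq:table} are right (e.g.\ $\mathrm{ad}(L_i,M_j)L_k=(i-k)L_{i+k-j}$, $\mathrm{ad}(L_i,M_j)M_k=(k-j)M_{j+k-i}$), the $\mathrm{ad}(p_0)$-eigenvalues $-(i+j)$, $i+j$, $j-i$ of the three families are correct, and the dimension count per graded piece is sound: with the degree fixed, $\mathrm{ad}(L_i,L_j)$ and $\mathrm{ad}(M_i,M_j)$ depend only on the scalar $j-i$, while $\mathrm{ad}(L_i,M_{i-r})=p_r+(\tfrac r2-i)q_r$ is affine in $i$, so each eigenspace is spanned by $p_r,q_r,x_r,z_r$, and your separation of $L$- and $M$-components (with the coefficient functions $\tfrac r2-k$ and $-1$ on $L_k$) gives independence. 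I spot-checked the table against the explicit actions $p_rL_k=(\tfrac r2-k)L_{k+r}$, $p_rM_k=(k+\tfrac r2)M_{k-r}$, $q_rL_k=-L_{k+r}$, $q_rM_k=M_{k-r}$, $x_rM_k=-L_{r-k}$, $z_rL_k=-M_{-r-k}$: all of $[p_r,p_s]$, $[p_r,q_s]$, $[p_r,x_s]$, $[p_r,z_s]$, $[q_r,x_s]$, $[q_r,z_s]$, $[z_r,x_s]$ and the vanishing brackets come out as in \eqref{eq:adA}, and your formulas for $x_0,z_0,q_0$ are consistent with the $r\neq0$ ones, so the degree-zero cases need no separate treatment beyond the remark you already make. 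The grading by $\mathrm{ad}(p_0)$ is a genuinely organizing idea that the statement itself does not supply, and it is what makes both the basis claim and the verification of the table manageable; the only caveat is that your uniqueness phrasing for $x_r$ and $z_r$ within a graded piece should be read as the concrete component-separation argument (which you do give) rather than as an a priori characterization.
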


\vspace{2mm} Let $V$ be the vector space defined by  \eqref{eq:V}, for arbitrary $\lambda, \mu\in \mathbb F$, define $\mathbb F$-linear mapping
$$\psi_{\lambda\mu}: \mbox{ad}(A_{\omega}^{\delta}) \rightarrow gl(V),$$

\begin{equation}\label{eq:bar1}
\begin{cases}
\psi_{\lambda\mu}(p_r)v_{\alpha}=(\lambda+\alpha-r\mu)v_{\alpha-r}, ~~ \forall r\in Z, ~~ \alpha\in \mathbb F,\\
\psi_{\lambda\mu}(q_r)v_{\alpha}=\psi_{\lambda\mu}(x_r)v_{\alpha}=\psi_{\lambda\mu}(z_r)v_{\alpha}=0, ~~ \forall r\in Z, ~~ \alpha\in \mathbb F.\\
\end{cases}
\end{equation}

\begin{theorem}\label{thm:bar1} For  $\forall \lambda, \mu\in \mathbb F$, $(V, \psi_{\lambda\mu})$ is  a reducible $ \mbox{ad}(A_{\omega}^{\delta})$-module with submodules
 $ U(\alpha)=\sum\limits_{m\in Z} \mathbb Fv_{\alpha+m}$, $\forall \alpha\in \mathbb F$, and

1) if $\mu=0$, then $W=\mathbb F v_{-\lambda}$ is a trivial submodule; if ~$\mu=1$, then  $U'(-\lambda)= \sum\limits_{m\in Z_{\neq 0}}\mathbb F v_{-\lambda+ m}$~is irreducible;

2) $U(-\lambda)= \sum\limits_{m\in Z}\mathbb F v_{-\lambda+ m}$~is irreducible if and only if $\mu\neq 1$ and  $\mu \neq 0$.
\end{theorem}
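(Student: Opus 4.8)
The plan is to treat this in three stages: (i) verify that $\psi_{\lambda\mu}$ is a Lie algebra homomorphism, so that $(V,\psi_{\lambda\mu})$ genuinely is an $\mathrm{ad}(A_\omega^\delta)$-module for all $\lambda,\mu$; (ii) read off the submodules $U(\alpha)$ and Part~1) directly from the index-shifting form of $\psi_{\lambda\mu}(p_r)$; and (iii) prove the irreducibility assertions in Part~2) by first using the diagonal action of $p_0$ to reduce to submodules spanned by basis vectors, then running a ``reachability'' argument.

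For stage (i), observe from \eqref{eq:bar1} that $\psi_{\lambda\mu}$ kills $q_r,x_r,z_r$ for every $r$, so in the bracket table \eqref{eq:adA} every relation except $[p_r,p_s]=(r-s)p_{r+s}$ has both sides sent to $0$ automatically. Hence it is enough to check
\[
[\psi_{\lambda\mu}(p_r),\psi_{\lambda\mu}(p_s)]v_\alpha=(r-s)\psi_{\lambda\mu}(p_{r+s})v_\alpha,
\]
and the left side equals $\bigl((\lambda+\alpha-s\mu)(\lambda+\alpha-s-r\mu)-(\lambda+\alpha-r\mu)(\lambda+\alpha-r-s\mu)\bigr)v_{\alpha-r-s}$, which simplifies to $(r-s)(\lambda+\alpha-(r+s)\mu)v_{\alpha-r-s}$, exactly the right side, for all $\lambda,\mu\in\mathbb F$. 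For stage (ii), since $\psi_{\lambda\mu}(p_r)v_\beta\in\mathbb F v_{\beta-r}$ with $r\in Z$, the subspace $U(\alpha)=\sum_{m\in Z}\mathbb F v_{\alpha+m}$ is invariant, and it is a proper nonzero submodule (no coset $\alpha+Z$ is all of $\mathbb F$, as $\mathrm{char}\,\mathbb F=0$), so $(V,\psi_{\lambda\mu})$ is reducible. Part~1) then drops out: if $\mu=0$ then $\psi_{\lambda0}(p_r)v_{-\lambda}=(\lambda-\lambda)v_{-\lambda-r}=0$, so $\mathbb F v_{-\lambda}$ is a trivial submodule; if $\mu=1$ then $\psi_{\lambda1}(p_r)v_{-\lambda+m}=(m-r)v_{-\lambda+m-r}$, which lies in $U'(-\lambda)$ for every $m\neq0$ (either $m-r\neq0$, or the scalar vanishes), so $U'(-\lambda)$ is a submodule.

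The crux is stage (iii). The key structural fact is that $p_0$ acts diagonally: $\psi_{\lambda\mu}(p_0)v_{-\lambda+m}=m\,v_{-\lambda+m}$, so inside $U(-\lambda)$ (and inside $U'(-\lambda)$) the basis vectors are $p_0$-eigenvectors with pairwise distinct eigenvalues; hence any submodule, being $p_0$-stable, is the span of a subset of these basis vectors, and irreducibility reduces to the statement that starting from one basis vector one reaches all the others under the operators $\psi_{\lambda\mu}(p_r)$, i.e.\ ``shift the index by $-r$ and multiply by $m-r\mu$''. For $U'(-\lambda)$ with $\mu=1$: from $v_{-\lambda+m}$ ($m\neq0$) the choice $r=m-m'$ yields $m'\,v_{-\lambda+m'}\neq0$ for each $m'\neq0$, giving irreducibility. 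For $U(-\lambda)$ with $\mu\neq0,1$: from $v_{-\lambda+m}$ with $m\neq0$ the choice $r=m$ yields $m(1-\mu)v_{-\lambda}\neq0$, so $v_{-\lambda}$ lies in the submodule; and then $\psi_{\lambda\mu}(p_r)v_{-\lambda}=-r\mu\,v_{-\lambda-r}\neq0$ for all $r\neq0$, so every basis vector is in it, proving $U(-\lambda)$ irreducible. Conversely, when $\mu=0$ or $\mu=1$, Part~1) already exhibits $\mathbb F v_{-\lambda}$, respectively $U'(-\lambda)$, as a proper nonzero submodule of $U(-\lambda)$, so $U(-\lambda)$ is reducible; combining the two directions gives the ``if and only if'' in 2).

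I expect the only delicate point to be the bookkeeping of when the scalars $m-r\mu$ degenerate to zero in the reachability step — one must check that the passage $v_{-\lambda+m}\rightsquigarrow v_{-\lambda}\rightsquigarrow v_{-\lambda+k}$ never stalls, which happens precisely under the hypotheses $\mu\neq0$ and $\mu\neq1$ — while everything else is formal once the $p_0$-weight decomposition is available.
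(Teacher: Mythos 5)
Your proposal is correct and follows essentially the same route as the paper: a direct verification that only the relation $[p_r,p_s]=(r-s)p_{r+s}$ needs checking (since $\psi_{\lambda\mu}$ annihilates $q_r,x_r,z_r$), followed by the reachability argument for the (ir)reducibility claims, which is exactly the ``similar discussion to Theorem~\ref{thm:T1}'' that the paper invokes. The one genuine improvement is your explicit use of the diagonal action $\psi_{\lambda\mu}(p_0)v_{-\lambda+m}=m\,v_{-\lambda+m}$ with distinct eigenvalues to justify that every submodule is spanned by basis vectors — a step the paper leaves implicit but which is needed to make the reachability argument a complete proof of irreducibility.
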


\begin{proof}
By Eqs \eqref{eq:adA} and  \eqref{eq:bar1}, for  $\forall r, s\in Z$, $\lambda, \mu, \alpha\in \mathbb F$,
$$\psi_{\lambda\mu}([p_r, p_s])v_\alpha =(r-s)\psi_{\lambda\mu}(p_{r+s})v_{\alpha}=(r-s)(\lambda+\alpha-(r+s)\mu)v_{\alpha-r-s}.$$

\begin{eqnarray*}
& &[\psi_{\lambda\mu}(p_r),\psi_{\lambda\mu}( p_s)]v_\alpha\\
&=&(\psi_{\lambda\mu}(p_r)\psi_{\lambda\mu}( p_s)-\psi_{\lambda\mu}(p_s)\psi_{\lambda\mu}( p_r))v_\alpha\\
&=&(\lambda+\alpha-s\mu)[\lambda+\alpha-(s+r\mu)]v_{\alpha-s-r}-(\lambda+\alpha-r\mu)[\lambda+\alpha-(r+s\mu)]v_{\alpha-r-s}\\
&=&(r-s)(\lambda+\alpha-(r+s)\mu)v_{\alpha-r-s}.
\end{eqnarray*}

Then we get  $$\psi_{\lambda\mu}([p_r, p_s])= [\psi_{\lambda\mu}(p_r),\psi_{\mu\nu}( p_s)], ~~ \forall r, s\in Z, \lambda, \mu\in \mathbb F.$$
It follows that  $(V, \psi_{\lambda\mu})$ is an ad$(A_{\omega}^{\delta})$-module.

By Eqs \eqref{eq:adA} and  \eqref{eq:bar1}, and the complete  similar discussion to Theorem \ref{thm:T1},  $(V, \psi_{\mu\nu})$ satisfies  1) and 2).
\end{proof}

\begin{theorem}\label{thm:bar2}  In the case $\mu=0$ and $\mu=1$, the Lie algebra ad$(A_{\omega}^{\delta})$-module $(V, \psi_{\lambda\mu})$ is the induced module  associated with  $T_{\lambda\mu}$.
\end{theorem}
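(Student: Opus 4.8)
The plan is to verify directly that the $\mathrm{ad}(A_{\omega}^{\delta})$-module $(V,\psi_{\lambda\mu})$ for $\mu\in\{0,1\}$ is exactly the module $\overline{\rho_{\lambda\mu}}$ obtained from the $3$-Lie algebra module $T_{\lambda\mu}=(V,\rho_{\lambda\mu})$ via the construction of Theorem~\ref{thm:barrho}. To do this I first need the hypothesis of that theorem, namely $Z(L_{\rho_{\lambda\mu}}(V))=0$, so that $\overline{\rho_{\lambda\mu}}$ is well defined. Here $L_{\rho_{\lambda\mu}}(V)$ is the Lie subalgebra of $gl(V)$ generated by the operators $\rho_{\lambda\mu}(x,y)$; by \eqref{eq:one} it is spanned by the operators $\rho_{\lambda\mu}(L_r,M_s)$, which act on the weight vector $v_\alpha$ by the shift $v_\alpha\mapsto(\lambda+\alpha+(s-r)\mu)v_{\alpha+s-r}$. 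So $L_{\rho_{\lambda\mu}}(V)$ consists of operators graded by $s-r\in Z$, and a central element must be homogeneous of degree $0$ and commute with all degree-$k$ generators; a short computation with the shift operators shows the only such element is $0$ (this uses that for $\mu\in\{0,1\}$ the coefficient $\lambda+\alpha+(s-r)\mu$ is not identically zero as $\alpha$ varies). Hence Theorem~\ref{thm:barrho} applies and $(V,\overline{\rho_{\lambda\mu}})$ is an $\mathrm{ad}(A_{\omega}^{\delta})$-module.

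Next I would compute $\overline{\rho_{\lambda\mu}}$ on the basis $\{p_r,q_r,x_r,z_r\}$ of $\mathrm{ad}(A_{\omega}^{\delta})$ given in Lemma~\ref{lemma:1}, using $\overline{\rho_{\lambda\mu}}(\mathrm{ad}(x,y))=\rho_{\lambda\mu}(x,y)$ and extending linearly. For $x_r=\frac1r\mathrm{ad}(L_r,L_0)$ and $z_r=\frac{1}{-r}\mathrm{ad}(M_{-r},M_0)$ the defining relations \eqref{eq:one} give $\rho_{\lambda\mu}(L_r,L_0)=0$ and $\rho_{\lambda\mu}(M_{-r},M_0)=0$, so $\overline{\rho_{\lambda\mu}}(x_r)=\overline{\rho_{\lambda\mu}}(z_r)=0$; likewise $x_0,z_0$ map to $0$. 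For $p_r=\frac12(\mathrm{ad}(L_0,M_{-r})+\mathrm{ad}(L_r,M_0))$ with $r\neq0$, apply $\frac12(\rho_{\lambda\mu}(L_0,M_{-r})+\rho_{\lambda\mu}(L_r,M_0))$ to $v_\alpha$: the first term contributes $\frac12(\lambda+\alpha-r\mu)v_{\alpha-r}$ and the second $\frac12(\lambda+\alpha-r\mu)v_{\alpha-r}$, summing to $(\lambda+\alpha-r\mu)v_{\alpha-r}$, which matches $\psi_{\lambda\mu}(p_r)v_\alpha$ in \eqref{eq:bar1}; the case $p_0=\mathrm{ad}(L_0,M_0)$ gives $(\lambda+\alpha)v_\alpha$, again matching. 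For $q_r=\frac1r(\mathrm{ad}(L_0,M_{-r})-\mathrm{ad}(L_r,M_0))$ the two terms have coefficients $\frac1r(\lambda+\alpha-r\mu)$ and $-\frac1r(\lambda+\alpha-r\mu)$ of the same shifted vector $v_{\alpha-r}$, so they cancel and $\overline{\rho_{\lambda\mu}}(q_r)=0$; similarly $q_0=\mathrm{ad}(L_0,M_0)-\mathrm{ad}(L_1,M_1)$ gives $(\lambda+\alpha)v_\alpha-(\lambda+\alpha)v_\alpha=0$. Thus $\overline{\rho_{\lambda\mu}}$ agrees with $\psi_{\lambda\mu}$ on every basis element, hence $\overline{\rho_{\lambda\mu}}=\psi_{\lambda\mu}$, which is exactly the assertion.

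I expect the main obstacle to be the verification that $Z(L_{\rho_{\lambda\mu}}(V))=0$: one must identify $L_{\rho_{\lambda\mu}}(V)$ concretely enough (as a graded subalgebra of shift operators on $V=\bigoplus_\alpha\mathbb F v_\alpha$) to pin down its center, and handle the two cases $\mu=0$ and $\mu=1$ separately since in the $\mu=0$ case the operator $\rho_{\lambda0}(L_0,M_0)$ has a kernel (the line $\mathbb F v_{-\lambda}$) while its nonzero spectrum still rules out a nontrivial central degree-$0$ operator. The remaining steps are purely mechanical substitutions into \eqref{eq:one}, \eqref{eq:pqxzr}, and \eqref{eq:bar1}. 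Once $\overline{\rho_{\lambda\mu}}=\psi_{\lambda\mu}$ is established, the theorem follows immediately from the definition of induced module stated after Theorem~\ref{thm:barrho}.
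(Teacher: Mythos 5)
Your proposal is correct and follows essentially the same route as the paper: check $Z(L_{\rho_{\lambda\mu}}(V))=0$ so that $\overline{\rho_{\lambda\mu}}$ is defined via Theorem~\ref{thm:barrho}, then evaluate $\overline{\rho_{\lambda\mu}}$ on the basis $\{p_r,q_r,x_r,z_r\}$ of $\mathrm{ad}(A_{\omega}^{\delta})$ and match it with $\psi_{\lambda\mu}$ from \eqref{eq:bar1}. In fact you justify the vanishing of the center (via the graded shift-operator description of $L_{\rho_{\lambda\mu}}(V)$) in more detail than the paper, which merely asserts it.
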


\begin{proof} From  Theorem \ref{thm:T1} and Eq \eqref{eq:one}, the center $Z(L_{\rho_{\lambda\mu}}(V))=0$.  Thanks to Theorem \ref{thm:barrho},  ad$(A_{\omega}^{\delta})$-module $(V, \overline{\rho_{\lambda\mu}})$ is the induced module of $T_{\lambda\mu}$, where $\overline{\rho_{\lambda\mu}}$ is defined by \eqref{eq:bar}. By   \eqref{eq:one} ,  $\forall r\in Z, \alpha, \lambda,  \mu\in \mathbb F$,

$$\overline{\rho_{\lambda\mu}}(\mbox{ad}(L_0, M_{-r}))v_\alpha=\rho_{\lambda\mu}(L_0, M_{-r})v_\alpha=(\lambda+\alpha-r\mu)v_{\alpha-r},$$

$$\overline{\rho_{\lambda\mu}}(\mbox{ad}(L_r, M_0))v_\alpha=\rho_{\lambda\mu}(L_r, M_{0})v_\alpha=(\lambda+\alpha-r\mu)v_{\alpha-r}.$$

Thanks to  \eqref{eq:pqxzr} and \eqref{eq:adA},

$$\overline{\rho_{\lambda\mu}}(p_r)v_\alpha=\overline{\rho_{\lambda\mu}}\Big(\frac{1}{2}\mbox{ad}(L_0, M_{-r})+\frac{1}{2}\mbox{ad}(L_r, M_0)\Big)v_\alpha=(\lambda+\alpha-r\mu)v_{\alpha-r}=\psi_{\lambda\mu}(p_r)v_{\alpha},$$

$$\overline{\rho_{\lambda\mu}}(q_r)v_\alpha=\psi_{\lambda\mu}(q_r)(v_{\alpha})=0,  \overline{\rho_{\lambda\mu}}(x_r)v_\alpha=\psi_{\lambda\mu}(x_r)v_{\alpha}=0,$$

$$\overline{\rho_{\lambda\mu}}(z_r)v_\alpha=\psi_{\lambda\mu}(z_r)(v_{\alpha})=0, ~~ \forall \alpha\in \mathbb F, ~~ r\in Z.$$

Follows from \eqref{eq:bar1}, we get
$$\overline{\rho_{\lambda\mu}}=\psi_{\lambda\mu},  ~~ \mbox{for}~~ \mu=0, ~~\mu=1.$$ It follows the result.
\end{proof}

\begin{remark}

From Theorem \ref{thm:bar2}, in the case $\mu=0, 1$, the inner derivation algebra ad$(A_{\omega}^{\delta})$-module $(V, \psi_{\lambda\mu})$ is the induced module.
But in the case  $\mu\neq 0, 1$, by Theorem \ref{thm:T1} and Theorem \ref{thm:bar2},  the inner derivation algebra ad$(A_{\omega}^{\delta})$-module $(V, \psi_{\lambda\mu})$ can not be induced by  $A_{\omega}^{\delta}$-modules.
\end{remark}

At last of the paper, we construct an ad$(A_{\omega}^{\delta})$-module $(V, \phi_{\mu})$, which is not an induced module.

Define  $\phi_{\mu}: \mbox{ad}(A_{\omega}^{\delta}) \rightarrow gl(V)$:
\begin{equation}\label{eq:bar3}
\begin{cases}
\phi_{\mu}(p_r)v_{\alpha}=(\alpha-r)v_{\alpha-r}, ~~ \forall r\in  Z_{\neq 0}, ~~ \alpha\in \mathbb F_{\neq 0},\\
\phi_{\mu}(p_r)v_{0}=r(\mu-r)v_{-r}, ~~ \forall r\in  Z_{\neq 0}, \\
\phi_{\mu}(p_0)v_{\alpha}=\alpha v_{\alpha}, ~~ \forall \alpha\in \mathbb F, \\
\phi_{\mu}(q_r)v_{\alpha}=\phi_{\mu}(x_r)v_{\alpha}=\phi_{\mu}(z_r)v_{\alpha}=0, ~~ \forall r\in  Z, ~~\forall  \alpha\in \mathbb F,\\
\end{cases}
\end{equation}

\begin{theorem}\label{thm:bar3}  For any $\mu\in \mathbb F,$ $(V, \phi_{\mu})$ is a reducible ad$(A_{\omega}^{\delta})$-module, where $\phi_{\mu}$ is defined by \eqref{eq:bar3},  and

1) for~$\forall \alpha\in \mathbb{F}-Z, U(\alpha)$ is an irreducible submodule;

2) $U(0)$ is a indecomposable  submodule containing an irreducible submodule $\bar U=\sum\limits_{m\in Z_{\neq 0}} \mathbb{F}v_{m}$;

3) $(V, \phi_{\mu})$ is not an induced module.
\end{theorem}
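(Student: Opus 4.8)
\textbf{Proof plan for Theorem~\ref{thm:bar3}.}

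The plan is to verify the three assertions in order, the first two being routine module-theoretic checks along the lines of Theorems~\ref{thm:T1} and~\ref{thm:bar1}, and the third being the substantive one. First I would check that $\phi_\mu$ really is a representation of $\mathrm{ad}(A_\omega^\delta)$: since $\phi_\mu$ kills all $q_r,x_r,z_r$, the only bracket relations in \eqref{eq:adA} that impose a constraint are $[p_r,p_s]=(r-s)p_{r+s}$, so it suffices to confirm $\phi_\mu([p_r,p_s])=[\phi_\mu(p_r),\phi_\mu(p_s)]$ on each $v_\alpha$. For $\alpha\notin Z$ this is the same computation as in Theorem~\ref{thm:bar1} with $\mu$ replaced by $1$ and $\lambda$ by $0$ (note $\phi_\mu(p_r)v_\alpha=(\alpha-r)v_{\alpha-r}$ agrees with $\psi_{01}(p_r)$), so it holds; the only cases needing separate attention are those where an intermediate vector lands on $v_0$, i.e. $\alpha\in Z$, where the modified rule $\phi_\mu(p_r)v_0=r(\mu-r)v_{-r}$ is used. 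I would tabulate $[\phi_\mu(p_r),\phi_\mu(p_s)]v_0$ and $[\phi_\mu(p_r),\phi_\mu(p_s)]v_m$ for $m\in Z_{\neq0}$ and check the identity survives; the coefficient $r(\mu-r)$ is engineered precisely so that it does.

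For (1), for $\alpha\in\mathbb F-Z$ all scalars $\alpha-r$ with $r\in Z$ are nonzero, so from any $v_{\alpha+m}\in U(\alpha)$ one reaches every $v_{\alpha+m'}$ by applying a suitable $\phi_\mu(p_r)$ (using $m'=m-r$), whence $U(\alpha)$ is irreducible; that $U(\alpha)$ is a submodule is immediate from \eqref{eq:bar3} since the weight is shifted by $-r\in Z$. For (2), $U(0)=\sum_{m\in Z}\mathbb F v_m$ is a submodule; on $\bar U=\sum_{m\in Z_{\neq0}}\mathbb F v_m$ one has $\phi_\mu(p_r)v_m=(m-r)v_{m-r}$ and $\phi_\mu(p_r)v_0=r(\mu-r)v_{-r}$, so from any $v_m$ with $m\neq0$ one reaches every $v_{m'}$ with $m'\neq0$ (choosing $r=m-m'\neq0$ gives coefficient $m-r=m'\neq0$), hence $\bar U$ is irreducible; but $\bar U$ is not a direct summand because $\phi_\mu(p_r)v_0=r(\mu-r)v_{-r}\in\bar U$ maps $v_0$ into $\bar U$ nontrivially (for generic $r$), so no complementary submodule through $v_0$ can exist, giving indecomposability of $U(0)$.

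The main point is (3). The strategy is to argue by contradiction: suppose $(V,\phi_\mu)=(W,\bar\rho)$ were the induced module associated with some $3$-Lie algebra $A_\omega^\delta$-module $(W,\rho)$ with $Z(L_\rho(W))=0$, so that $W=V$ and $\bar\rho(\mathrm{ad}(x,y))=\rho(x,y)$ for all $x,y$. Then $\rho$ must satisfy the $3$-Lie module axioms \eqref{eq:mod1}--\eqref{eq:mod2}. In particular, reading off $\phi_\mu$ on the generators $p_r=\tfrac12(\mathrm{ad}(L_0,M_{-r})+\mathrm{ad}(L_r,M_0))$ and using that $\phi_\mu$ vanishes on $x_r,z_r$ (hence $\rho(L_r,L_0)=0=\rho(M_{-r},M_0)$ and $\rho(L_0,M_{-r})=\rho(L_r,M_0)=\phi_\mu(p_r)$), one forces $\rho(L_r,M_s)=\phi_\mu(p_{r-s})$ for all $r,s$ — but this already collides with the requirement that $\rho$ be $\mathbb F$-bilinear and alternating on $A_\omega^\delta\wedge A_\omega^\delta$: the map $\mathrm{ad}$ sends $(L_r,M_0)$ and $(L_0,M_{-r})$ to the same inner derivation only up to the central ambiguity quotiented out in \eqref{eq:bar}, yet \eqref{eq:mod2} applied to the triple $(L_r,L_0,M_s)$ (whose bracket is $-rL_{r-s}$ by \eqref{eq:table}) yields a quadratic relation among the operators $\phi_\mu(p_{\bullet})$ that the piecewise definition \eqref{eq:bar3} violates. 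Concretely, I would plug the triple $[L_1,L_0,M_1]=-L_0$ (or a similar small triple chosen so that $v_0$ appears) into \eqref{eq:mod2}, getting $\rho(-L_0,M_t)=\rho(L_1,L_0)\rho(M_1,M_t)+\rho(L_0,M_1)\rho(L_1,M_t)+\rho(M_1,L_1)\rho(L_0,M_t)$; the left side is $-\phi_\mu(p_{-t})$ while the right side, after using $\rho(L_1,L_0)=0$, becomes $\phi_\mu(p_0)\phi_\mu(p_{1-t})-\phi_\mu(p_{1-1})\phi_\mu(p_{-t})$-type expression, and evaluating both sides on $v_0$ produces the coefficient identity $-(\,-t)(\mu-(-t))\ne$ (the composite involving the "$\alpha v_\alpha$" action of $p_0$ on the shifted vector), which fails for generic $t$ — equivalently, the "defect" $r(\mu-r)-(0-r)=\mu r$ that distinguishes $\phi_\mu$ from $\psi_{01}$ cannot be reproduced by any choice of $\rho$ because $\psi_{01}$ is (by Theorem~\ref{thm:bar2}) the unique induced module with that $p$-action pattern. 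The cleanest phrasing: any induced module restricted to the $p_r$'s must coincide with $\overline{\rho_{\lambda\mu}}|_{\{p_r\}}=\psi_{\lambda\mu}|_{\{p_r\}}$ from \eqref{eq:bar1}, which is globally linear in $\alpha$, whereas $\phi_\mu$ is not affine-linear in $\alpha$ across $\alpha=0$; since $\psi_{\lambda\mu}(p_r)v_\alpha=(\lambda+\alpha-r\mu)v_{\alpha-r}$ forces $\lambda=0,\mu=1$ to match $\phi_\mu$ on $\mathbb F-Z$, and then $\psi_{01}(p_r)v_0=-rv_{-r}\ne r(\mu-r)v_{-r}=\phi_\mu(p_r)v_0$ unless $\mu=0$, while $\mu=0$ contradicts the match on $\mathbb F-Z$, we reach a contradiction. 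I expect the bookkeeping in selecting the right small triple so that the $v_0$-anomaly is exposed, and checking the alternating/bilinearity constraints on $\rho$ carefully, to be the only real obstacle; everything else is linear algebra.
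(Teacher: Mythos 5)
Your proposal follows essentially the same route as the paper: the module axiom reduces to the $[p_r,p_s]$ relation checked case-by-case around $\alpha=0$; irreducibility/indecomposability are obtained by reaching basis vectors with nonzero coefficients; and for 3) one assumes $\phi_\mu=\bar\varphi$, deduces $\varphi(L_r,M_s)=\phi_\mu(p_{r-s})$ (the paper's \eqref{eq:two}), and exhibits a quadruple violating \eqref{eq:mod2} at $v_0$ — your choice $(L_1,L_0,M_1,M_t)$ works (the two sides differ by $t(1-t)v_t$), the paper uses $(L_4,L_3,M_2,M_1)$. One caveat: your alternative ``cleanest phrasing'' of 3) — that any induced module must restrict on the $p_r$ to some $\psi_{\lambda\mu}$ from \eqref{eq:bar1} — is not justified, since neither you nor the paper classifies all $3$-Lie module structures on $V$; Theorem \ref{thm:T1} only treats the specific family $T_{\lambda\mu}$, so you should rely on the explicit \eqref{eq:mod2} violation rather than on that uniqueness claim.
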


\begin{proof} Thanks to
 \eqref{eq:adA} and \eqref{eq:bar3}, for  $\forall r, s\in Z$, $\mu\in \mathbb F$,

$$\phi_{\mu}([p_r, p_s])v_\alpha = (r-s)\phi_{\mu}(p_{r+s})v_\alpha\\
=\begin{cases}
(r-s)\alpha v_\alpha , r+s=0, \alpha\in \mathbb F;\\
(r-s)(\alpha-r-s)v_{\alpha-r-s}, r+s\neq 0, \alpha\in \mathbb F_{\neq 0};\\
(r-s)(r+s)(\mu-r-s)v_{-r-s}, r+s\neq 0, \alpha= 0.
\end{cases}
$$

$$\phi_{\mu}(p_r)\phi_{\mu}(p_s)v_\alpha = \begin{cases}
\alpha^2 v_\alpha, ~~~~~s=r=0;\\
\alpha(\alpha-r)v_{\alpha-r}, ~~~~~s=0, r\neq 0, \alpha \neq 0;\\
0, ~~~~~s=\alpha=0, r\neq 0;\\
(\alpha-s)^2v_{\alpha-s}, ~~~~~s\neq0, \alpha \neq 0, r=0;\\
(\alpha-s)(\alpha-r-s)v_{\alpha-r-s}, ~~~~~s\neq 0, r\neq 0, \alpha\neq 0, \alpha\neq s;\\
0, ~~~~~s\neq 0, r\neq 0, \alpha\neq 0, \alpha = s;\\
s^2(s-\mu)v_{-s}, ~~~~~s\neq 0, \alpha=r=0;\\
s(\mu-s)(-r-s)v_{-r-s}, ~~~~~s\neq 0, \alpha=0, r\neq 0.
\end{cases}
$$

$$\phi_{\mu}(p_s)\phi_{\mu}(p_r)v_\alpha = \begin{cases}
\alpha^2v_\alpha, ~~~~~s=r=0;\\
\alpha(\alpha-s)v_{\alpha-s}, ~~~~~r=0, s\neq 0, \alpha \neq 0;\\
0, ~~~~~r=\alpha=0, s\neq 0;\\
(\alpha-r)^2v_{\alpha-r}, ~~~~~r\neq0, \alpha \neq 0, s=0;\\
(\alpha-r)(\alpha-r-s)v_{\alpha-r-s}, ~~~~~r\neq 0, s\neq 0, \alpha\neq 0, \alpha\neq r;\\
0, ~~~~~r\neq 0, \alpha\neq 0, s\neq 0, \alpha = r;\\
r^2(r-\mu)v_{-r}, ~~~~~r\neq 0, \alpha=s=0;\\
r(\mu-r)(-r-s)v_{-r-s}, r\neq 0, \alpha=0, s\neq 0.
\end{cases}
$$

Therefore,
\begin{eqnarray*}
[\phi_{\mu}(p_r), \phi_{\mu}(p_s)]v_\alpha = (\phi_{\mu}(p_r)\phi_{\mu}(p_s)-\phi_{\mu}(p_s)\phi_{\mu}(p_r))v_\alpha\\
=\begin{cases}
0, s=r=0;\\
r(\alpha-r)v_{\alpha-r}, s=0, r\neq 0, \alpha\neq 0;\\
r^2(\mu-r)v_{-r}, s=\alpha=0, r\neq 0;\\
s(s-\alpha)v_{\alpha-s}, s\neq0, \alpha\neq 0, r=0;\\
s^2(s-\mu)v_{-s}, s\neq 0, \alpha=r=0;\\
(r-s)(r+s)(\mu-r-s)v_{-r-s}, r\neq 0, s\neq 0, \alpha=0;\\
(r-s)(\alpha-r-s)v_{\alpha-s-r}, s\neq 0, r\neq 0, \alpha\neq 0;
\end{cases}
\end{eqnarray*}
 and
\begin{eqnarray*}
\phi_{\mu}([p_r, p_s])v_\alpha &=&[\phi_{\mu}(p_r), \phi_{\mu}(p_s)]v_\alpha\\
&=& (\phi_{\mu}(p_r)\phi_{\mu}(p_s)-\phi_{\mu}(p_s)\phi_{\mu}(p_r))v_\alpha.
\end{eqnarray*}

It follows that $(V, \phi_{\mu})$ is an ad$(A_{\omega}^{\delta})$-module.

\vspace{2mm}For any ~$\alpha\in \mathbb F -Z$, and arbitrary
distinct $m, m'\in Z$, there are nonzero $r\in Z$ such that
~$m'=m-r$. By \eqref{eq:bar3}, we nave
\begin{eqnarray*}
\phi_{\mu}(p_r) v_{\alpha+ m}=(m'+\alpha)v_{\alpha+m'}.
\end{eqnarray*}
Thanks to ~$\alpha\in \mathbb F -Z$, $m'+\alpha\neq 0$, and $U(\alpha)=\phi_{\mu}\big($ad$(A_{\omega}^{\delta})\big)(v_{\alpha+m})$. We get 1).

If $\alpha\in Z$, then  $U(\alpha)=U(0)$.

For arbitrary distinct $m, m'\in Z_{\neq 0}$, there is ~nonzero $r_0\in Z$,~such that~$m'=m-r_0$. Thanks to \eqref{eq:bar3},
$$\phi_{\mu}(p_{r_0})v_{m}=m'v_{m'},$$
Therefore, $\bar U=\sum\limits_{m\in Z_{\neq 0}} \mathbb{F}v_{m}$ is an irreducible submodule.

Since for any nonzero $r\in Z$,
$$ \phi_{\mu}(p_r) v_{0}=r(\mu-r)v_{-r}\in U(0).$$
Follows from $\bar U=\sum\limits_{m\in Z_{\neq 0}} \mathbb{F}v_{m}$ is irreducible,  $U(0)$ is an indecomposable. It follows 2).

If $(V, \phi_{\mu})$ is an induced module, then by \eqref{eq:bar}, $\phi_{\mu}=\bar{ \varphi}$, where

$$ \varphi: A_{\omega}^{\delta}\wedge A_{\omega}^{\delta}\rightarrow gl(V), ~~\mbox{and}~~~~ \phi_{\mu}(\mbox{ad}(x, y))= \varphi(x, y),~~ \forall x, y\in A_{\omega}^{\delta},$$
and $(V, \varphi)$ is a 3-Lie algebra $A_{\omega}^{\delta}$-module.

Thanks to Eqs \eqref{eq:bar}, \eqref {eq:pqxzr} and \eqref{eq:bar3}, $\forall r, s\in Z$, $\alpha\in \mathbb F$, $\varphi$ satisfies that

\begin{equation}
 \begin{cases}
 \varphi(L_r, M_s)v_\alpha=\begin{cases}
\alpha v_\alpha, ~~~~ r= s, \alpha\in \mathbb{F},\\
(s-r+\alpha)v_{s-r+\alpha},~ r\neq s,  \alpha \neq 0,\\
(r-s)(s-r+\mu)v_{s-r}, ~~~~ r\neq s,  \alpha=0;\\
\end{cases}\\\\
 \varphi(L_r, L_s) v_\alpha=\varphi(M_r, M_s)v_\alpha=~~0, \quad ~r, ~~s\in Z,  \alpha\in \mathbb{F}. \\
\end{cases}\\
\label{eq:two}
\end{equation}

Therefore, $\forall r, m, s, l\in Z, \alpha, \mu\in\mathbb F$,
\begin{eqnarray*}
 \varphi([L_r, L_s, M_m], M_l)v_\alpha=\begin{cases}
(s-r)\alpha v_\alpha, ~~~~r+s=m+l, \alpha\in\mathbb{F},\\
(s-r)(m+l-r-s+\alpha)v_{m+l-r-s+\alpha},\\
{\mbox where } ~~r+s\neq m+l, \alpha\in\mathbb F_{\neq 0},\\
(s-r)(r+s-m-l)(l-r-s+m+\mu)v_{l-r-s+m}, \\
{\mbox where } ~~ r+s\neq m+l, \alpha=0;
\end{cases}
\end{eqnarray*}

\begin{eqnarray*}
 \varphi(L_s, M_m)\varphi(L_r, M_l)v_\alpha
=\begin{cases}
\alpha^2 v_\alpha, ~~~~s=m, r=l, \alpha\in\mathbb{F},\\
0, ~~~~r=l, s\neq m, \alpha=0;\\
\alpha(m-s+\alpha)v_{m-s+\alpha},~~s\neq m, r=l, \alpha\neq 0,\\
(l-r+\alpha)^2v_{l-r+\alpha}, ~~r\neq l, s=m, \alpha\neq 0,\\
-(l-r)^2(l-r+\mu)v_{l-r}, ~~r\neq l, s=m, \alpha=0,\\
0, ~~~~r\neq l, s\neq m, \alpha\neq 0, l-r+\alpha=0,\\
(l-r+\alpha)(m-s+l-r+\alpha)v_{m-r+l-s+\alpha}, \\
{\mbox where } ~~r\neq l, s\neq m, \alpha\neq 0, l-r+\alpha\neq 0,\\
(r-l)(l-r+\mu)(l-s+m-r)v_{l-s+m-r}, \\
{\mbox where } ~~r\neq l, s\neq m, \alpha= 0;
\end{cases}\end{eqnarray*}

\begin{eqnarray*}
 \varphi(M_m, L_r)\varphi(L_s, M_l)v_\alpha
=\begin{cases}
-\alpha^2 v_\alpha, ~~~~r=m, s=l, \alpha\in\mathbb{F},\\
0, ~~~~s=l, r\neq m, \alpha=0;\\
-\alpha(m-r+\alpha)v_{m-r+\alpha}, ~~~~s=l, r\neq m, \alpha\neq 0,\\
-(l-s+\alpha)^2v_{l-s+\alpha}, ~~~~s\neq l, r=m, \alpha\neq 0;\\
(l-s)^2(l-s+\mu)v_{l-s}, ~~~~s\neq l, r=m, \alpha=0,\\
0, ~~~~s\neq l, r\neq m, \alpha\neq 0, l-s+\alpha=0;\\
-(l-s+\alpha)(m-r+l-s+\alpha)v_{m-r+l-s+\alpha}, \\
{\mbox where } ~~s\neq l, r\neq m, \alpha\neq 0, l-s+\alpha\neq 0,\\
-(s-l)(l-s+\mu)(m-r+l-s)v_{m-r+l-s}, \\
{\mbox where } ~~s\neq l, r\neq m, \alpha= 0;
\end{cases}\end{eqnarray*}

\vspace{3mm}\hspace{1.5cm}$
 \varphi(L_r, L_s)\varphi (M_m, M_l)v_\alpha=0.
$

\vspace{3mm}Then for $r=4$, $s=3$, $m=2$, $l=1$, $\alpha=0$, and $\forall \mu\in \mathbb F,$
\begin{eqnarray*}
 \varphi([L_4, L_3, M_2], M_1)v_\alpha=-4(\mu-4)v_{-4},
\end{eqnarray*}
\begin{eqnarray*}
& & \varphi(L_3, M_2)\varphi(L_4, M_1)v_\alpha+ \varphi (L_4, L_3)\varphi (M_2, M_1)v_\alpha+\varphi(M_2, L_4)\varphi(L_3, M_1)v_\alpha\\
&=&(-12(\mu-3)+8(\mu-2))v_{-4}=-4(\mu-5)v_{-4},
\end{eqnarray*}
that is, for some $l, r, s, m\in Z,$ we have
\begin{eqnarray*}
& & \varphi([L_r, L_s, M_m], M_l)v_\alpha~~~~~~~~~~~~~~~~~~~~~~~~~~~~~~~~~~~~~~~~~~~~~~~~~~~~~~~~~~\\
&\neq&  \varphi(L_s, M_m)\varphi(L_r, M_l)v_\alpha+ \varphi(L_r, L_s)\varphi(M_m, M_l)v_\alpha+\varphi(M_m, L_r)\varphi(L_s, M_l)v_\alpha.
\end{eqnarray*}
Contradiction. It follows 3). The proof is complete. \end{proof}

\noindent
{\bf Acknowledgements. } The first author was supported in part by  the Natural
Science Foundation of Hebei Province (A2018201126).

\bibliography{}

\end{document}